\theoremstyle{plain}%
 \newtheorem{theorem}{Theorem}
\theoremstyle{remark}
\theoremstyle{definition}
\newtheorem{definition}{Definition}
\newtheorem{example}{Example}
\begin{document}

\begin{center}
 {\Large The prime-counting Copeland--Erd\H{o}s constant} 
\end{center}

\begin{center}
{\textsc{John M. Campbell}} 

 \ 

\end{center}

\begin{abstract}
 Let $(a(n) : n \in \mathbb{N})$ denote a sequence of nonnegative integers. Let $0.a(1)a(2)...$ denote the real number obtained by concatenating the 
 digit expansions, in a fixed base, of consecutive entries of $(a(n) : n \in \mathbb{N})$. Research on digit expansions of this form has mainly to do with 
 the normality of $0.a(1)a(2)...$ for a given base. Famously, the Copeland--Erd\H{o}s constant $0.2357111317...$, for the case whereby $a(n)$ equals 
 the $n^{\text{th}}$ prime number $p_{n}$, is normal in base 10. However, it seems that the ``inverse'' construction given by concatenating the decimal 
 digits of $(\pi(n) : n \in \mathbb{N})$, where $\pi$ denotes the prime-counting function, has not previously been considered. Exploring the distribution of 
 sequences of digits in this new constant $0.0122...9101011...$ would be comparatively difficult, since the number of times a fixed $m \in \mathbb{N} $ 
 appears in $(\pi(n) : n \in \mathbb{N})$ is equal to the prime gap $g_{m} = p_{m+1} - p_{m}$, with the behaviour of prime gaps notoriously elusive. Using 
 a combinatorial method due to Sz\"{u}sz and Volkmann, we prove that Cram\'{e}r's conjecture on prime gaps implies the normality of $0.a(1)a(2)...$ in a 
 given base $g \geq 2$, for $a(n) = \pi(n)$. 
\end{abstract}

\noindent {\footnotesize{\emph{Keywords:}} Normal number, prime-counting function, decimal expansion, prime gap}

\noindent {\footnotesize{\emph{MSC:}} 11K16, 11A63} 

\section{Introduction}
 The study of normal numbers forms a large and important area in probabilistic number theory. Much of our notation concerning normal numbers is 
 based on the work of Sz\"{u}sz and Volkmann in \cite{SzuszVolkmann1994}. Following \cite{SzuszVolkmann1994}, we let $g \geq 2$ be a fixed 
 parameter throughout our article, letting it be understood that we are working with digits in base $g$, unless otherwise specified. For a real value $ 
 \alpha$, and for a block $E$ of digits, let $A_{E}(\alpha, n)$ denote the number of copies of $E$ within the first $n$ digits of $\alpha$. The real number 
 $\alpha$ is said to be \emph{normal of order $k$} if 
\begin{equation}\label{SVnormal}
 \lim_{n \to \infty} \frac{ A_{E}(\alpha, n) }{n} = \frac{1}{g^{\ell(E)}} 
\end{equation}
 for all $E$ such that $\ell(E) = k$, where $\ell(E)$ denotes the \emph{length} of $E$, or the number of digits in $E$, counting multiplicities. For $k = 1$, the 
 specified property concerning \eqref{SVnormal} is referred to as \emph{simple normality}. A real number $\alpha$ is said to be \emph{normal} if it is 
 normal for all orders $k \in \mathbb{N}$. In this article, we introduce a constant related to the prime-counting function that may be seen as something 
 of an inverse relative to the construction of the famous Copeland--Erd\H{o}s constant \cite{CopelandErdos1946}, and we prove, under the assumption 
 of Cram\'{e}r's conjecture, that this new constant is normal. This is inspired by past work related to the normality of real numbers defined via 
 concatenations of number-theoretic functions, as in 
 \cite{CattCoonsVelich2016,DeKoninckKatai2016,DeKoninckKatai2011,PollackVandehey2015Besicovitch,PollackVandehey2015Some,Vandehey2013}. 

\subsection{Background}
 For a sequence 
 $(a(n) : n \in \mathbb{N})$ of nonnegative integers, 
 we let 
\begin{equation}\label{generalconcatenate}
 0.a(1)a(2)\ldots 
\end{equation}
 denote the real value given by concatenating the digit expansions of consecutive entries of the aforementioned sequence. The first real value proved to 
 be normal is famously due to Champernowne \cite{Champernowne1933} and is given by the case whereby $a(n) = n$ for all $n \in \mathbb{N}$ in 
 \eqref{generalconcatenate}, in base 10. Shortly afterwards, Besicovitch \cite{Besicovitch1935} proved a result that may be applied to obtain the normality 
 of the corresponding constant for the $a(n) = n^2$ case \cite{PollackVandehey2015Besicovitch}. This normality result was then generalized by 
 Davenport and Erd\H{o}s \cite{DavenportErdos1952} for the case whereby $a(n)$ is a polynomial satisfying certain conditions. The polynomial cases 
 we have covered lead us to consider the behaviour of the digits in \eqref{generalconcatenate} for number-theoretic sequences. 

 A famous 1946 result due to Copeland and Erd\H{o}s \cite{CopelandErdos1946} provides the base-10 normality of \eqref{generalconcatenate} for the 
 case whereby $a(n) $ is equal to the $n^{\text{th}}$ prime number $p_{n}$. In this case, the constant 
\begin{equation}\label{numericalCE}
 0.235711131719232...
\end{equation}
 of the form indicated in \eqref{generalconcatenate} is referred to as the \emph{Copeland--Erd\H{o}s constant}. Copeland and Erd\H{o}s' 1946 article 
 \cite{CopelandErdos1946} is seminal within areas of number theory concerning normal numbers, and this inspires the exploration of variants of 
 \eqref{numericalCE}, with the use of number-theoretic sequences in place of $(p_{n} : n \in \mathbb{N} )$. As a natural variant of the 
 Copeland--Erd\H{o}s constant, we consider the constant 
\begin{equation}\label{numericalPCCE}
 0.012233444455666677888899999910101111... 
\end{equation}
 obtained from \eqref{generalconcatenate} by setting the sequence $(a(n) : n \in \mathbb{N})$ to be equal to the sequence $(\pi(n) : n \in 
 \mathbb{N}) $ given by the prime-counting function. Copeland and Erd\H{o}s' proof \cite{CopelandErdos1946} of the normality of 
 \eqref{numericalCE} relied on the property whereby the sequence $(p_{n} : n \in \mathbb{N})$ is strictly increasing and the property given by $p_{n} = 
 n^{1 + o(1)}$, but, since $(\pi(n) : n \in \mathbb{N} )$ is not strictly increasing, the techniques from \cite{CopelandErdos1946} cannot be translated so 
 as to be applicable to the constant in \eqref{numericalPCCE}. 

\section{Main construction}
 It seems that references on normal numbers related to Copeland and Erd\H{o}s' work in \cite{CopelandErdos1946}, including references such as 
 \cite{BaileyCrandall2002,BecherFigueiraPicchi2007,DavenportErdos1952,MadritschThuswaldnerTichy2008,NivenZuckerman1951} 
 that have inspired our work, have not involved the ``inverse'' constant in \eqref{numericalPCCE}. Moreover, integer sequences involving 
\begin{equation}\label{notinOEIS}
 (0, 1, 2, 2, 3, 3, 4, 4, 4, 4, 5, 5, 6, 6, 6, 6, 7, 7, 8, 8, 8, 8, 9, 9, 9, 9, 9, 9, 1, 0, \ldots) 
\end{equation}
 are not currently included in the On-Line Encyclopedia of Integer Sequences, where the tuple in \eqref{notinOEIS} is given by the consecutive digits of 
 the constant in \eqref{numericalPCCE}, which we refer to as the \emph{prime-counting Copeland--Erd\H{o}s (PCCE) constant}. In relation to this 
 constant, we are to apply the remarkable result that was originally formulated by Sz\"{u}sz and Volkmann in 1994 \cite{SzuszVolkmann1994} and that 
 was later corrected by Pollack and Vandehey \cite{PollackVandehey2015Besicovitch} and that is reproduced below, and we are to later explain 
 notation/terminology given in the following Theorem. 

\begin{theorem}\label{theoremSV}
 (Sz\"{u}sz $\&$ Volkmann, 1994) Suppose that $f$ is a differentiable function, and that $f$ is monotonically increasing and positive for all $x \geq n_{0}(f)$ 
 and that both $\eta(f)$ and $\eta(f')$ exist and that $0 < \eta(f) \leq 1$. It follows that $f$ is a Champernowne 
 function \cite{SzuszVolkmann1994} (cf.\ \cite{PollackVandehey2015Besicovitch}). 
\end{theorem}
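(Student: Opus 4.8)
The plan is to unwind the conclusion that $f$ is a Champernowne function into the assertion that the concatenation $\theta_f = 0.f(1)f(2)\cdots$ is normal of every order $k$, and then to verify the defining limit in \eqref{SVnormal} for an arbitrary block $E$ with $\ell(E) = k$. Writing $\lambda(t)$ for the number of base-$g$ digits of a positive integer $t$, and $S(N) = \sum_{m \le N} \lambda(f(m))$ for the length of the prefix of $\theta_f$ produced after $f(1), \dots, f(N)$ have been written out, I would first evaluate $A_E(\theta_f, n)$ along the subsequence $n = S(N)$ and recover general $n$ afterwards: since $A_E(\theta_f, \cdot)$ is nondecreasing and $S(N+1) - S(N) = \lambda(f(N+1))$ is of smaller order than $S(N)$, a sandwiching argument transfers the limit from $n = S(N)$ to all $n$. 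Next I would split the occurrences of $E$ counted by $A_E(\theta_f, S(N))$ into those lying wholly inside the digit string of a single value $f(m)$, contributing $\sum_{m \le N} a_E(f(m))$ where $a_E(t)$ denotes the within-value occurrence count, and those straddling a junction between consecutive values; each junction contributes at most $\ell(E) - 1$ of the latter, so their total is $O(N)$, which is negligible against $S(N)$ because the average digit length $S(N)/N$ tends to infinity.

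It then remains to show that $\sum_{m \le N} a_E(f(m)) \sim g^{-\ell(E)} \, S(N)$. The hypotheses on $\eta(f)$ enter first through the digit-count asymptotics: monotonicity and positivity give $\lambda(f(m)) = \log_g f(m) + O(1)$, and $0 < \eta(f) \le 1$ pins down the growth of $\log_g f(m)$, yielding a clean asymptotic for $S(N)$ and, in particular, $S(N)/N \to \infty$ as used above. The target ratio $g^{-\ell(E)}$ is exactly the frequency supplied by Champernowne's counting of blocks among the base-$g$ expansions of consecutive integers: over any interval $[T, (1+\varepsilon)T)$ the integers share their top $O_\varepsilon(1)$ digits while their remaining $\lambda(T) - O_\varepsilon(1)$ low-order digits range over all combinations, so the number of occurrences of $E$ among them is $g^{-\ell(E)}(1 + o(1))$ times the total number of their digits as $T \to \infty$.

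The decisive step is to transfer this uniform frequency from all integers to the image values $f(1), \dots, f(N)$. Grouping the summands by the value $v = f(m)$, the number of indices $m$ landing near a given $v$ is controlled, through monotonicity and differentiability, by $1/f'$; the existence of $\eta(f')$ is what makes $1/f'$ regularly varying, hence essentially constant across a short window $[T, (1+\varepsilon)T)$ up to a factor $1 + O(\varepsilon)$. Partitioning $[f(1), f(N)]$ into such windows, applying the windowed Champernowne count on each with this nearly constant weight, summing, and finally letting $\varepsilon \to 0$ after $N \to \infty$, I would obtain $\sum_{m \le N} a_E(f(m)) = g^{-\ell(E)} S(N)(1 + o(1))$, completing the verification of \eqref{SVnormal}.

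The main obstacle is precisely this last transfer: because $f$ may skip most integers, one cannot simply inherit the block frequency from Champernowne's theorem, and the subset $\{f(m)\}$ must be shown to sample the low-order digits uniformly within each window. This is where the regular variation forced by the existence of $\eta(f')$ is indispensable — it prevents the reweighting by $1/f'$ from biasing the frequency away from $g^{-\ell(E)}$ — and it is exactly the quantitative control at the window boundaries that is the delicate point, being the kind of estimate whose original treatment required the later correction noted above.
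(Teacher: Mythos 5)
First, a point of context: the paper never proves Theorem~\ref{theoremSV} at all --- it is quoted as a known result from \cite{SzuszVolkmann1994}, as corrected in \cite{PollackVandehey2015Besicovitch}, and is used as a black box in the proof of the paper's main theorem. So your proposal has to be measured against the literature proof, and there it has a genuine gap, located exactly at the step you yourself call decisive. Your outer skeleton (junction occurrences are $O(N)=o(S(N))$, sandwiching from $n=S(N)$ to general $n$, reduction to within-value counts) is the standard and correct frame, but the crux is resolved by a false lemma: you claim that the existence of $\eta(f')$ ``makes $1/f'$ regularly varying, hence essentially constant across a short window $[T,(1+\varepsilon)T)$ up to a factor $1+O(\varepsilon)$.'' The limit \eqref{etadefinition} is a purely logarithmic-scale condition, equivalent to $f'(x)=x^{\eta(f')+o(1)}$, and the $x^{o(1)}$ factor may oscillate by a fixed factor inside every such window. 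For instance, $f'(x)=(2+\sin x)/\sqrt{x}$ integrates to $f(x)=4\sqrt{x}+O(1)$, so $f$ satisfies every hypothesis of the theorem ($\eta(f)=\tfrac{1}{2}$, $\eta(f')=-\tfrac{1}{2}$, differentiable, eventually monotone and positive), yet $1/f'$ oscillates by a factor of roughly $3$ on every window, however small $\varepsilon$ is. Consequently the multiplicities with which the values $\lfloor f(m)\rfloor$ sample the integers of a window never become constant to within $1+O(\varepsilon)$, and your windowed Champernowne count does not go through.

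What the hypotheses actually yield is only that the weights $1/f'$ vary within a window by a factor $T^{o(1)}$, and to defeat even that fluctuation one needs a quantitative ingredient entirely absent from your sketch: a Besicovitch-type large-deviation bound asserting that the number of integers in $[1,T]$ whose base-$g$ expansion carries a block $E$ with frequency off by more than $\varepsilon$ is $O(T^{1-\delta})$ for some $\delta=\delta(\varepsilon,k)>0$ (cf.\ \cite{Besicovitch1935,CopelandErdos1946}). With that power saving, the exceptional values contribute at most a $T^{o(1)-\delta}$ proportion of the digits even after worst-case reweighting; with only the qualitative ``density zero'' statement you import from Champernowne's counting, a $T^{o(1)}$ --- indeed even a bounded, non-vanishing --- weight fluctuation can in principle concentrate on the exceptional set. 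A telling symptom is that your argument never invokes the hypothesis $\eta(f)\leq 1$. That hypothesis is what guarantees that roughly $T^{1/\eta(f)+o(1)}\geq T^{1-o(1)}$ indices $m$ produce values in $[1,T]$, so that a $T^{1-\delta}$-sized exceptional set cannot swallow the sample; for $\eta(f)>1$ the value set is sparser than any such exceptional bound and the transfer genuinely fails. Since the restriction to $0<\eta(f)\leq 1$ is precisely the locus of the correction of \cite{SzuszVolkmann1994} made in \cite{PollackVandehey2015Besicovitch}, a sketch that proves the statement without ever using that hypothesis cannot be completed as written.
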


\begin{definition}
 For a real-valued function $f$ defined on a domain containing $\mathbb{N}$ such that $f(n) > 0$, we let $\alpha(f) = \alpha_{g}(f)$ denote the real 
 number such that the base-$g$ expansion of this real number is of the form $0.b_{1}b_{2}...$, where $b_{n}$ denotes the base-$g$ expansion 
 of $ \left\lfloor f(n) \right\rfloor$ \cite{SzuszVolkmann1994}. 
\end{definition}

\begin{example}
 The PCCE constant in \eqref{numericalPCCE} is equal to $\alpha_{10}(\pi)$, writing $\pi$ in place of the prime-counting function. 
\end{example}

\begin{definition}
 A function $f$ such that $\lim_{x \to \infty} f(x) = \infty$ is a \emph{Champernowne function} if: For all $g \geq 2$, the value $\alpha_{g}(f)$ is normal 
 in base $g$ \cite{SzuszVolkmann1994}. 
\end{definition}

\begin{example}
 The identity function $f$ mapping $n$ to $n$ is a Champernowne function (cf.\ \cite{NakaiShiokawa1992}). 
\end{example}

\begin{definition}
 For a real-valued, positive function $f$, we set 
\begin{equation}\label{etadefinition}
 \eta(f) = \lim_{x \to \infty} \frac{\log f(x)}{\log x},
\end{equation}
 under the assumption that this limit exists. 
\end{definition}

\begin{example}\label{sqrtcons}
 As noted in \cite{PollackVandehey2015Besicovitch,SzuszVolkmann1994}, the constant $0.1112222233333334...$ given by setting $f(n) = \sqrt{n}$ in 
 $\alpha(f)$ is normal, with $\eta(f) = \frac{1}{2}$. 
\end{example}

 A difficulty associated with the explicit construction of a function $f$ that satisfies all of the required properties in Theorem \ref{theoremSV} and that can be 
 used, via Sz\"{u}sz and Volkmann's combinatorial method, to prove the normality of \eqref{numericalPCCE} has to do with the limit associated with 
 $\eta(f')$, since, for example, $f'$ cannot vanish infinitely often, in view of the definition in \eqref{etadefinition}. A more serious problem has to do 
 with how the distribution of groupings of digits in the PCCE constant would depend on the behaviour of the sequence $(g_{n} = p_{n+1} - p_{n} : n \in 
 \mathbb{N} )$ of prime gaps, since the number of times $m \in \mathbb{N}$ appears in $(\pi(n) : n \in \mathbb{N})$ is equal to $g_{m}$. In this 
 regard, we are to make use of a purported formula, under the assumption of Cram\'{e}r's conjecture, concerning the size of prime gaps. 

 \emph{Cram\'{e}r's conjecture} (cf.\ \cite{Cramer1936}) often refers to the purported estimate 
\begin{equation}\label{mainCramer}
 g_{n} = O(\log^2 p_{n}). 
\end{equation}
 The weaker formula 
\begin{equation}\label{needsRH}
 g_{n} = O(\sqrt{p_{n}} \log p_{n} ), 
\end{equation}
 which Cram\'{e}r proved \cite{Cramer1936} under the assumption of the Riemann Hypothesis (RH), is not sufficient for our purposes, as we later discuss. 
 The problem, here, has to do with how we would want $\lim_{n \to \infty} \frac{\log h_{n}}{\log n} $ to vanish for a function $h_{n}$ 
 approximating $g_{n} $. 

 Our construction of a function $f$ that satisfies the conditions in Theorem \ref{theoremSV} and that may be used to prove the normality of the PCCE 
 constant may be viewed as being analogous to how the $\Gamma$-function provides a differentiable analogue of the factorial function defined on natural 
 numbers. As indicated above, $\eta(f')$ could not be vanishing infinitely often, if we were to apply Sz\"{u}sz and Volkmann's combinatorial method 
 \cite{SzuszVolkmann1994}, which leads us to consider how $f'$ could be bounded, in such a way to guarantee the existence of the limit associated with 
 $ \eta(f')$. Informally, if we consider the graph of the prime-counting function as a function defined on $\mathbb{R}$, and if we consider a function $f$ 
 that projects onto the graph of $\pi$ under $\lfloor \cdot \rfloor$ and that has a derivative that is reasonably well behaved, we would expect the 
 derivative function $f'(x)$, for values $x$ such that $p_{m} \leq x < p_{m+1}$, to be ``reasonably close'' to $\frac{1}{g_{m}}$. Formalizing this 
 notion in a way that would allow us to apply Theorem \ref{theoremSV} is nontrivial, 
 as below. 

\begin{theorem}
 Under the assumption of Cram\'{e}r's conjecture, the PCCE constant is normal in base 10 and, more generally, the constant $0.a(1)a(2)\cdots$ is normal 
 in base $g$ for $a(n) = \pi(n)$. 
\end{theorem}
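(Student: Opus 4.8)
The plan is to realize the PCCE constant as $\alpha_g(f)$ for a single differentiable function $f$ meeting every hypothesis of Theorem~\ref{theoremSV}, so that $f$ is automatically a Champernowne function and $\alpha_g(f)$ is normal for all $g \geq 2$. First I would build $f$ as a smooth antiderivative of a suitable density, in the spirit of the $\Gamma$-function analogy: choose a $C^{\infty}$, strictly positive function $\rho$ on $[2,\infty)$ normalized so that $\int_{p_m}^{p_{m+1}} \rho(t)\,dt = 1$ for every $m$, and set $f(x) = m + \int_{p_m}^{x} \rho(t)\,dt$ for $x \in [p_m, p_{m+1}]$. Such a $\rho$ can be taken with $\rho(x)$ comparable to $1/g_m$ throughout $[p_m, p_{m+1})$, with short monotone transition zones near each $p_{m+1}$ arranged so that $f$ is globally differentiable (indeed $C^{\infty}$) and strictly increasing. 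By construction $f(p_m) = m$ and $0 \le f(x) - m < 1$ for $x \in [p_m, p_{m+1})$, so that $\lfloor f(n) \rfloor = \pi(n)$ for every integer $n$; hence $\alpha_g(f)$ agrees digit-for-digit with $0.a(1)a(2)\cdots$ for $a(n) = \pi(n)$, and $\alpha_{10}(f)$ is the constant in \eqref{numericalPCCE}. Using an antiderivative of a smooth density, rather than mollifying the piecewise-linear interpolant of the points $(p_m, m)$ directly, is what lets me keep the floor identity exact, since the integer corner values of that interpolant would otherwise be destroyed by smoothing.

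The hypotheses of Theorem~\ref{theoremSV} that come for free are monotonicity, positivity, and $\eta(f)$. Since $\rho > 0$, the function $f$ is increasing and positive for all large $x$, and $f(x) \to \infty$. Because $\lfloor f(x)\rfloor$ tracks $\pi(x)$, the prime number theorem gives $f(x) = \pi(x) + O(1) = x^{1+o(1)}$, whence, by the definition in \eqref{etadefinition}, $\eta(f) = \lim_{x\to\infty} \log f(x)/\log x = 1$, which lies in the required range $0 < \eta(f) \le 1$.

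The hard part, and the only place where Cram\'{e}r's conjecture enters, is showing that $\eta(f')$ exists. Here $f' = \rho$, which never vanishes, so the issue flagged after \eqref{etadefinition} is avoided; moreover $\log \rho(x) = -\log g_m + O(1)$ uniformly for $x \in [p_m, p_{m+1})$, the transition zones contributing only a bounded additive error because there $\rho$ stays between $1/\max(g_m,g_{m+1})$ and $1/\min(g_m,g_{m+1})$. Since $p_{m+1} = p_m(1+o(1))$, we have $\log x = (1+o(1))\log p_m$ for $x \in [p_m,p_{m+1})$, so that
\begin{equation}
 \frac{\log f'(x)}{\log x} = -\,(1+o(1))\,\frac{\log g_m}{\log p_m}.
\end{equation}
Now $\log g_m \ge 0$ trivially, while under Cram\'{e}r's conjecture \eqref{mainCramer} we have $g_m = O(\log^2 p_m)$ and hence $0 \le \log g_m \le 2\log\log p_m + O(1) = o(\log p_m)$. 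Squeezing, $\log g_m/\log p_m \to 0$, so $\eta(f') = 0$ exists. This is exactly where the weaker, RH-level bound \eqref{needsRH} fails: it only gives $\log g_m/\log p_m \le \tfrac12 + o(1)$, an upper bound that does not force the ratio to a limit and so does not establish that $\eta(f')$ exists at all.

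With all the hypotheses verified, Theorem~\ref{theoremSV} shows that $f$ is a Champernowne function, so $\alpha_g(f)$ is normal in every base $g \ge 2$; taking $g = 10$ yields the normality of the PCCE constant. I expect the verification of $\eta(f') = 0$ to be the principal obstacle, both because it forces the Cram\'{e}r-level gap bound and because it is intertwined with the construction of $\rho$: the density must simultaneously make $f$ smooth, keep $\lfloor f(n)\rfloor = \pi(n)$ exact, and keep $\log f'(x) = -\log g_m + O(1)$ so that the squeeze above goes through.
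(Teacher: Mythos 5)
Your proposal is correct and follows essentially the same route as the paper: both realize the PCCE constant as $\alpha_g(f)$ for a differentiable interpolant $f$ of $\pi$ with $\lfloor f \rfloor = \pi$ and with $f'$ comparable to $1/g_m$ on each $[p_m, p_{m+1}]$, verify $\eta(f) = 1$ and (via Cram\'{e}r's bound $g_m = O(\log^2 p_m)$, giving $\log g_m = o(\log p_m)$) that $\eta(f') = 0$, and then invoke Theorem~\ref{theoremSV}. The differences are presentational rather than substantive --- the paper writes down an explicit piecewise-linear derivative (plus explicit quadratics near the origin) where you posit an abstract smooth density $\rho$, and your claim that the transition zones contribute only an $O(1)$ error to $\log \rho$ should really be $O\left(\left|\log(g_{m+1}/g_m)\right|\right)$, which is harmless here since Cram\'{e}r's conjecture controls both gaps.
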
 

\begin{proof}
 First suppose that the natural number $m \in \mathbb{N}_{\geq 2}$ is such that $\frac{1}{g_{m-1}} > \frac{1}{g_{m}}$. 
 Let $\varepsilon^{(m)} > 0$. We set $q_{1}^{(m)} 
 = p_{m} + \varepsilon^{(m)}$ for $\varepsilon^{(m)} < 1$, 
 and we set 
\begin{equation}\label{q2m}
 q_{2}^{(m)} = \frac{q_{1}^{(m)} (g_{m-1} - g_{m})}{g_{m-1} g_{m}^2} 
 + \frac{ g_{m} p_m - g_{m-1} p_m + g_{m-1} g_{m}}{g_{m-1} g_{m}^2}. 
 \end{equation}
 We may verify that \eqref{q2m} 
 reduces in such a way so that 
\begin{equation}\label{qreduced}
 q_{2}^{(m)} = 
 \frac{ \varepsilon^{(m)} \left( \frac{1}{g_{m}} - \frac{1}{g_{m-1}} \right) }{g_{m}} + \frac{1}{g_{m}}, 
\end{equation}
 which gives us that $q_{2}^{(m)} < \frac{1}{g_{m}}$. 
 By letting $\varepsilon^{(m)}$ be sufficiently small, with 
\begin{equation}\label{epsilonmsmall}
 0 < \varepsilon^{(m)} < \frac{1}{\frac{1}{g_{m-1}} - \frac{1}{g_{m}}}, 
\end{equation}
 the condition in \eqref{epsilonmsmall} gives us that 
 $0 < q_{2}^{(m)} < \frac{1}{g_{m}}$. 
 We define the function $h^{\varepsilon}_{m}(x)$ on the interval $[p_{m}, p_{m+1}]$ so that: 
 \[ h_{\varepsilon}^{(m)}(x) = \begin{cases} 
 \frac{1 - q_{2}^{(m)} g_{m-1}}{g_{m-1} \left(p_m - q_{1}^{(m)} \right)} x 
 + \frac{q_{2}^{(m)} g_{m-1} p_m - q_{1}^{(m)}}{g_{m-1} \left(p_m - q_{1}^{(m)} \right)} 
 & \text{if $p_{m} \leq x \leq p_{m} + \varepsilon^{(m)}$,} \\ 
 \frac{1 - q_{2}^{(m)} g_{m}}{ g_{m} \left(p_{m+1} - q_{1}^{(m) } \right)} 
 x + 
 \frac{q_{2}^{(m)} g_{m} p_{m+1} - q_{1}^{(m)}}{g_{m} \left(p_{m+1} - 
 q_{1}^{(m)} \right)} & \text{if $p_{m} + \varepsilon^{(m)} \leq x \leq p_{m+1}$}. 
 \end{cases} \] 

 Again for a natural number $m \in \mathbb{N}_{\geq 2}$, we proceed to suppose that $\frac{1}{g_{m-1}} < \frac{1}{g_{m}}$. We write $\delta^{(m)} 
 > 0$. We set $r_{1}^{(m)} = p_{m} + \delta^{(m)}$, with $\delta^{(m)} < 1$. We set 
\begin{equation}\label{setr2m}
 r_{2}^{(m)} = \frac{ r_{1}^{(m)} (g_{m-1} - g_{m})}{g_{m-1} g_{m}^2} + \frac{g_{m} p_m - g_{m-1} p_{m+1} + 2 g_{m-1} g_{m}}{g_{m-1} g_{m}^2}. 
\end{equation}
 We may verify that \eqref{setr2m} reduces so that 
\begin{equation}\label{r2reduce}
 r_{2}^{(m)} = \frac{ \delta^{(m)} \left( \frac{1}{g_{m}} - \frac{1}{g_{m-1}} \right) }{g_{m}} + \frac{1}{g_{m}}, 
\end{equation}
 so that $r_{2}^{(m)} > \frac{1}{g_{m}}$. 
 We define the function $h^{(m)}_{\delta}(x)$ on the interval $[p_{m}, p_{m+1}]$ so that: 
\[ h^{(m)}_{\delta}(x) = 
 \begin{cases} 
 \frac{1 - r_{2}^{(m)} g_{m-1}}{g_{m-1} \left(p_m - r_{1}^{(m)} \right)} x 
 + \frac{ r_{2}^{(m)} g_{m-1} p_m - r_{1}^{(m)} }{ g_{m-1} \left(p_m - r_{1}^{(m)} \right)} 
 & \text{if $p_{m} \leq x \leq p_{m} + \delta^{(m)}$}, \\ 
 \frac{1 - r_{2}^{(m)} g_{m} }{ g_{m} \left(p_{m+1} - r_{1}^{(m)} \right)} x + 
 \frac{ r_{2}^{(m)} g_{m} p_{m+1} - r_{1}^{(m)} }{ g_{m} \left(p_{m+1} - r_{1}^{(m)} 
 \right)} & \text{if $p_{m} + \delta^{(m)} \leq x \leq p_{m+1}$}. 
 \end{cases} \] 

 Finally, if $\frac{1}{g_{m-1}} = \frac{1}{g_{m}}$ for $m \in \mathbb{N}_{\geq 2}$, we define $h^{(m)}_{\text{null}}(x)$ on the interval $[p_{m}, 
 p_{m+1}]$ so that $h^{(m)}_{\text{null}}(x) = \frac{1}{g_{m}}$. 

 Now, set $f(x) = \frac{1}{96} \left(4 x^2+12 x+39\right)$ if $0 \leq x \leq \frac{3}{2}$, and set $f(x) = \frac{1}{4} \left(3 x^2-8 x+8\right)$ if 
 $\frac{3}{2} \leq x \leq 2$ and set $f(x) = x-1$ if $2 \leq x \leq 3$. Now, for $\pi(x) \in \mathbb{N}_{\geq 2}$, we set $ f(x) $ so that 
 \[ f(x) = \begin{cases} 
 \pi(x) + \int_{p_{\pi(x)}}^{x} h^{(\pi(x))}_{\varepsilon}(k) \, dk 
 & \text{if $\frac{1}{g_{\pi(x)-1}} > \frac{1}{g_{\pi(x)}}$}, \\ 
 \pi(x) + \int_{p_{\pi(x)}}^{x} h^{(\pi(x))}_{\delta}(k) \, dk 
 & \text{if $\frac{1}{g_{\pi(x)-1}} < \frac{1}{g_{\pi(x)}}$}, \\ 
 \pi(x) + \int_{p_{\pi(x)}}^{x} h^{(\pi(x))}_{\text{null}}(k) \, dk 
 & \text{if $\frac{1}{g_{\pi(x)-1}} = \frac{1}{g_{\pi(x)}}$}. 
 \end{cases} \] 
 By construction, we have that $f$ is a differentiable function and that $f$ is monotonically increasing and positive for $x \geq 0$. Moreover, the function 
 $f$ is constructed so that $\lfloor f(x) \rfloor = \pi(x)$ for all $x \geq 0$. We have that $\frac{1}{8} \leq f'(x) \leq 1$ for $0 \leq x \leq 3$, and, for $x 
 \geq 3$, we have that $f'(x)$ is either equal to $h^{(m)}_{\varepsilon}(x)$ or $h^{(m)}_{\delta}(x)$ or $h^{(m)}_{\text{null}}(x)$, according, 
 respectively, to the possibilities whereby $\frac{1}{g_{m-1}} > \frac{1}{g_{m}}$ and $\frac{1}{g_{m-1}} < \frac{1}{g_{m}}$ and $\frac{1}{g_{m-1}} 
 = \frac{1}{g_{m}}$. 

 If $\frac{1}{g_{m-1}} > \frac{1}{g_{m}}$, then, by construction, we have that 
\begin{equation}\label{inviewqred}
 0 < \frac{ \varepsilon^{(m)} \left( \frac{1}{g_{m}} - \frac{1}{g_{m-1}} \right) }{g_{m}} + \frac{1}{g_{m}} 
 \leq f'(x) \leq \frac{1}{g_{m-1}} 
\end{equation}
 for $p_{m} \leq x \leq p_{m+1}$, in view of \eqref{qreduced}, 
 where, for fixed $m$, the expression $\varepsilon^{(m)} > 0$ may be arbitrary, subject to \eqref{epsilonmsmall}. 

 If $\frac{1}{g_{m-1}} < \frac{1}{g_{m}}$, then, by construction, we have that 
\begin{equation}\label{recallepsilonupper}
 0 < \frac{1}{g_{m-1} } \leq f'(x) \leq 
 \frac{ \delta^{(m)} \left( \frac{1}{g_{m}} - \frac{1}{g_{m-1}} \right) }{g_{m}} + \frac{1}{g_{m}}, 
\end{equation}
 in view of \eqref{r2reduce}, and, for 
 $p_{m} \leq x \leq p_{m+1}$, the expression $\delta^{(m)} > 0$ is arbitrary on the specified interval. 

 Finally, by construction, if $\frac{1}{g_{m-1}} = \frac{1}{g_{m}}$, then 
 $$ 0 < \frac{1}{g_{m-1}} = \frac{1}{g_{m}} \leq f'(x) \leq \frac{1}{g_{m-1}} 
 = \frac{1}{g_{m}}. $$

 So, in any case, we find that 
\begin{equation*}
 \min\left\{ \overline{\varepsilon^{(\pi(x))}} 
 + \frac{1}{g_{\pi(x)}}, \frac{1}{g_{\pi(x)-1}} \right\} 
 \leq f'(x) \leq \max\left\{ \overline{\delta^{(\pi(x))}} 
 + \frac{1}{g_{\pi(x)}}, \frac{1}{g_{\pi(x)-1}} \right\}, 
\end{equation*}
 where the ``overlined'' expressions given above 
 are, respectively, given by the positive terms involving $\varepsilon^{(m)}$ and $\delta^{(m)}$ 
 in \eqref{inviewqred} and \eqref{recallepsilonupper}, writing $m = \pi(x)$. 

 Under the assumption of Cram\'{e}r's conjecture, as formulated in \eqref{mainCramer}, we find that there exists $M > 0$ and $x_{0} \in \mathbb{R}$ 
 such that 
\begin{equation}\label{underCramer}
 \forall y \geq x_{0} \ \frac{1}{ M \log^{2} p_{y} } \leq \frac{1}{g_{y}} \leq \frac{1}{2}. 
\end{equation}
 So, for suitable natural numbers $m \in \mathbb{N}_{\geq 2}$, 
 by taking $\overline{\varepsilon^{(m)}}$ to be sufficiently small 
 throughout a given interval $[p_{m}, p_{m+1}]$, and similarly for $\overline{\delta^{(m)}}$, from 
 the consequence of 
 Cram\'{e}r's conjecture given in \eqref{underCramer}, we may deduce that 
\begin{equation}\label{againCramer}
 \frac{1}{M \log^{2} p_{\pi(x)} } \leq f'(x) \leq 0.51, 
\end{equation}
 for sufficiently large $x$. 
 Using explicit bounds as in \cite{Dusart1999,Dusart2010,Rosser1941} for the prime-counting function and for the sequence 
 of primes, we may obtain, from \eqref{againCramer}, that 
\begin{equation}\label{2707273707972717870707P7M1A}
 \frac{1}{ M \log ^2\left(\frac{x \log \left(\frac{x \log \left(\frac{x}{\log (x)-1.1}\right)}{\log (x)-1.1}\right)}{\log (x)-1.1}\right)} 
 \leq f'(x) \leq 0.51, 
\end{equation}
 for sufficiently large $x$. By taking the natural logarithm of $f'(x)$ and the bounds in 
 \eqref{2707273707972717870707P7M1A}, and then dividing by $\log(x)$, 
 it is a matter of routine to verify that the limit as $x \to \infty$ of the resultant 
 bounds vanishes, so that $\eta(f')$ exists, as desired, with $\eta(f') = 0$. 

 By construction, we have that $ \pi(x) \leq f(x) \leq \pi(x) + 1 $ 
 for all $x$. So, we find that 
\begin{equation}\label{209293909992919785777P7M1A}
 \frac{\log \left(\frac{x}{\log (x)-1}\right)}{\log (x)} \leq 
 \frac{ \log(f(x)) }{\log(x)} \leq \frac{\log \left(\frac{x}{\log (x)-1.1}+1\right)}{\log (x)}. 
\end{equation}
 By taking the limit as $x \to \infty$ of the upper and lower bounds given in 
 \eqref{209293909992919785777P7M1A}, this gives us the same value of $1$, 
 so that $\eta(f) = 1$, as desired. 
 
 So, we have that $f$ is a differentiable function and is monotonically increasing and positive, 
 and we have that $\eta(f)$ and $\eta(f')$ exist and that 
 $0 < \eta(f) \leq 1$, 
 and we have that $\lim_{x \to \infty} f(x) = \infty$. 
 So, from Theorem \ref{theoremSV}, 
 we have that $f$ is a Champernowne function. 
\end{proof}

\section{Discussion}
 Following the work of Sz\"{u}sz and Volkmann, 
 as in the main article that has inspired our work \cite{SzuszVolkmann1994}, 
 we adopt the convention whereby 
 strings of digits are counted ``with overlaps allowed''
 in the sense that two occurrences of the same substring in a larger string
 are counted separately if these strings happen to overlap or otherwise. 
 For example, Sz\"{u}sz and Volkmann \cite{SzuszVolkmann1994} 
 provide the illustration whereby the substring $131$ is counted 
 four times within $713131051310131$, noting the substring $131$ overlapping with itself
 within $13131$. This convention concerning normal numbers 
 does not agree with the Mathematica commands {\tt StringCount} or {\tt SequenceCount}, 
 but, as a way of avoiding this kind of issue, we may instead use the Wolfram {\tt StringPosition}
 command. For example, inputting 
\begin{verbatim}
StringPosition["713131051310131", "131"]
\end{verbatim}
 into the Mathematica Computer Algebra System, we obtain a list of four elements, 
 which agrees with Sz\"{u}sz and Volkmann's convention for eunmerating 
 subsequences of digits. 
 
 By taking the first 10 million entries of the integer sequence 
 $ ( \pi(n) : n \in \mathbb{N}_{0})$, and then converting this subsequence into a string
 without commas or spaces or brackets, 
 and then counting the number of occurrences of 
 each digit among $1$, $2$, $\ldots$, $9$, $0$ 
 using the Wolfram {\tt StringPosition} function, and then 
 computing the frequency by dividing by the length of the string
 corresponding to $ ( \pi(n) : n \in \mathbb{N}_{\leq 10,000,000} )$, 
 we obtain the frequencies listed in Table \ref{TablePCCE}. 

\begin{table}[t]
\centering

 \begin{tabular}{ | c | c | }
 \hline
 Digit & Frequency corresponding to $ ( \pi(n) )_{n \in \mathbb{N}_{\leq 10^7}}$ \\ \hline 
 1 & 0.110875 \\ \hline 
 2 & 0.111823 \\ \hline 
 3 & 0.112635 \\ \hline 
 4 & 0.113276 \\ \hline 
 5 & 0.113596 \\ \hline 
 6 & 0.102678 \\ \hline 
 7 & 0.0835609 \\ \hline 
 8 & 0.0836607 \\ \hline 
 9 & 0.0839711 \\ \hline 
 0 & 0.0839241 \\ 
 \hline
 \end{tabular}

 \ 

\caption{Numerical evidence of the simple normality of the PCCE constant.}\label{TablePCCE}
\end{table}

 For each of the given frequencies $f$ shown in Table \ref{TablePCCE}, we have that $\left| f - \frac{1}{10} \right| < 0.0165$. The data in 
 Table \ref{TablePCCE} 
 may suggest that lower-value digits, according to the given ordering, may appear more frequently within the PCCE constant, say, in the sense 
 that the frequencies, within substrings corresponding to $(\pi(n) : n \in \mathbb{N}_{0})$, for lower-order digits may be, in general, higher 
 compared to the frequencies for higher-valued digits. 
 This recalls the statistical principle 
 known as \emph{Benford's law}, and we encourage the number-theoretic exploration of 
 this phenomenon with the use of the PCCE constant. 

 One may wonder why it would be appropriate, for the purposes of our application of the combinatorial 
 method due to Sz\"{u}sz and Volkmann \cite{SzuszVolkmann1994}, 
 to use the formulation of Cram\'{e}r's conjecture in \eqref{mainCramer}, 
 as opposed to the estimate for prime gaps that Cram\'{e}r proved under the assumption of the RH. 
 The problem, here, has to do with the lower bound that would correspond to \eqref{underCramer}, 
 if estimates other than Cram\'{e}r's conjecture were to be used. 
 If one were to attempt to make use of a lower bound as in 
\begin{equation}\label{attemptRH}
 \frac{1}{M \sqrt{p_{\pi(x)}} \log p_{\pi(x)} } \leq f'(x) \leq 0.51, 
\end{equation}
 under the assumption of the RH, in the hope of applying 
 the estimate of prime gaps shown in \eqref{needsRH}, 
 we encounter a problem concerning the behaviour of prime gaps, in the sense described as foillows. 

 To apply Theorem \ref{theoremSV}, the limit corresponding to 
 $\eta(f')$ would have to exist, but, by manipulating the inequalities in \eqref{attemptRH} so that 
\begin{equation}\label{showRHnotenough}
 \frac{ \log\left( \frac{1}{M \sqrt{p_{\pi(x)}} \log p_{\pi(x)} } \right) }{\log(x)} \leq 
 \frac{ \log\left( f'(x) \right) }{\log(x)} \leq \frac{ \log\left( 0.51\right)}{\log(x)}, 
\end{equation}
 we encounter the problem indicated as follows. By taking the limit as $x \to \infty$
 for the bounds in \eqref{showRHnotenough}, 
 the Prime Number Theorem gives us that 
 the limit corresponding to the lower bound reduces to $-\frac{1}{2}$, 
 whereas the limit corresponding to the upper bound vanishes, 
 so the valuations for these limits corresponding to the upper and lower bounds for 
 \eqref{showRHnotenough} would not imply that $\eta(f')$ 
 would exist. 
 How could the RH be used, in place of the 
 formulation of Cram\'{e}r's conjecture in \eqref{mainCramer}, to prove the normality of the PCCE constant? 

\begin{table}[t]
\centering

 \begin{tabular}{ | c | c | }
 \hline
 Digit & Frequency corresponding to $ \left( \left\lfloor \sqrt{n} \right\rfloor \right)_{n \in \mathbb{N}_{\leq 10^7}}$ \\ \hline 
 1 & 0.156047 \\ \hline 
 2 & 0.198942 \\ \hline 
 3 & 0.0980257 \\ \hline 

 4 & 0.0740972 \\ \hline 
 5 & 0.0758164 \\ \hline 
 6 & 0.0762815 \\ \hline 
 7 & 0.0776263 \\ \hline 
 8 & 0.0793403 \\ \hline 
 9 & 0.0810544 \\ \hline 
 0 & 0.0827685 \\ 
 \hline
 \end{tabular}

 \ 

\caption{Frequencies associated with the appearance of digits in the 
 Sz\"{u}sz--Volkmann constant (cf.\ \cite{PollackVandehey2015Besicovitch,SzuszVolkmann1994}) 
 indicated in Example \ref{sqrtcons}.}\label{sqrttable}
\end{table}

 The data in Table \ref{TablePCCE} may be regarded as offering strong evidence for the normality of the PCCE constant if we compare these data 
 to the correpsonding frequencies for the Sz\"{u}sz--Volkmann constant (cf.\ \cite{PollackVandehey2015Besicovitch,SzuszVolkmann1994}) given by 
 concatenating the digit expansions of consecutive integer parts of the 
 square root function, as in Example \ref{sqrtcons}. 
 Given the notoriously unpredictable behaviour of prime gaps, one might think that 
 the frequencies of digits in the PCCE constant 
 would be similarly unpredictable, compared to the 
 Sz\"{u}sz--Volkmann constant, but the data in Tables 
 \ref{TablePCCE} and \ref{sqrttable}
 suggest that the PCCE constant is actually much more well behaved compared to 
 the $a(n) = \left\lfloor \sqrt{n} \right\rfloor$ case of \eqref{generalconcatenate}. 
 For example, the largest frequency in Table \ref{sqrttable} is much farther 
 away from the desire mean of $0.1$, relative to the PCCE constant, 
 and similarly for the smallest frequency in Table \ref{sqrttable}. 

\subsection*{Acknowledgements} 
 The author was supported through a Killam Postdoctoral Fellowship from the Killam Trusts. The author is thankful to Joel E.\ Cohen for useful comments 
 concerning the subject of this article, and the author is thankful to Karl Dilcher for useful discussions concerning this article.

 \

John M.\ Campbell

Department of Mathematics and Statistics 

Dalhousie University

{\tt jmaxwellcampbell@gmail.com}

\end{document}